\documentclass[12pt,letterpaper]{article}


\usepackage{amssymb,amsfonts,amsmath,latexsym,amsthm}
\usepackage[usenames]{color}
\usepackage{bm}
\usepackage{url}
\usepackage{hyperref}
\usepackage[authoryear,round]{natbib}

\tolerance=1000

\theoremstyle{plain}
\newtheorem{theorem}{Theorem}[section]
\newtheorem{corollary}[theorem]{Corollary}
\newtheorem{lemma}[theorem]{Lemma}
\newtheorem{proposition}[theorem]{Proposition}

\newtheorem{assumption}[theorem]{Assumption}
\theoremstyle{definition}

\theoremstyle{remark}

\newtheorem*{remark}{Remark}
\numberwithin{equation}{section}

\newcommand{\boldface}{\bf}

\newcommand{\union}{\cup}
\newcommand{\n}{\cap}
\newcommand{\I}{\bigcap}
\newcommand{\U}{\bigcup}
\renewcommand{\implies}{\Rightarrow}

\renewcommand{\emptyset}{\varnothing}

\renewcommand{\a}{\alpha}
\newcommand{\T}{{\bm\theta}}
\renewcommand{\phi}{\varphi}


\newcommand{\branch}[4]{
\left\{
	\begin{array}{ll}
		#1  & \mbox{if } #2 \\
		#3 & \mbox{if } #4
	\end{array}
\right.
}
\newcommand{\C}{\mathcal{C}}
\newcommand{\A}{\mathcal{A}}
\newcommand{\B}{\mathcal{B}}
\newcommand{\X}{\mathcal{X}}
\newcommand{\Y}{\mathcal{Y}}
\renewcommand{\L}{\mathcal{L}}
\newcommand{\RC}{\mathcal{R}}
\newcommand{\UU}{\mathcal{U}}
\newcommand{\V}{\mathcal{V}}

\newcommand{\AI}{\mathcal{A}^\infty}
\newcommand{\XI}{\mathcal{X}^\infty}
\newcommand{\PI}{P^\infty_\theta}
\newcommand{\Xhat}{\widehat\X}

\DeclareSymbolFont{AMSb}{U}{msb}{m}{n}
\DeclareMathSymbol{\N}{\mathbin}{AMSb}{"4E}
\DeclareMathSymbol{\Z}{\mathbin}{AMSb}{"5A}
\DeclareMathSymbol{\R}{\mathbin}{AMSb}{"52}
\DeclareMathSymbol{\Q}{\mathbin}{AMSb}{"51}
\DeclareMathSymbol{\PP}{\mathbin}{AMSb}{"50}
\DeclareMathSymbol{\E}{\mathbin}{AMSb}{"45}

\begin{document}

\title{A detailed treatment of Doob's theorem}
\author{Jeffrey W. Miller\\
Harvard University, Department of Biostatistics}


\maketitle

\begin{abstract}
Doob's theorem provides guarantees of consistent estimation and posterior consistency under very general conditions.  Despite the limitation that it only guarantees consistency on a set with prior probability 1, for many models arising in practice, Doob's theorem is an easy way of showing that consistency will hold almost everywhere.  In this article, we give a detailed proof of Doob's theorem.
\end{abstract}

\section{Introduction}

\citet{Doob_1949} established a remarkable theorem on consistency in the Bayesian setting.
Roughly, Doob's theorem says that with probability $1$, if the true parameter $\theta_0\in\Theta$ is drawn from the prior $\Pi$, and $X_1,\dotsc,X_n$ i.i.d.\ $\sim P_{\theta_0}$ for an identifiable model $\{P_\theta:\theta\in\Theta\}$, then
\begin{enumerate}
\item the posterior mean of $g(\theta)$ converges to $g(\theta_0)$ for any $g\in L^1(\Pi)$, and
\item the posterior distribution of $\theta$ is consistent, i.e., the posterior concentrates in neighborhoods of the true parameter $\theta_0$,
\end{enumerate}
as $n\to\infty$, under very general conditions. The theorem is stunning in its generality and elegance. The heart of the proof is a direct application of martingale convergence, however, the bulk of the proof involves showing an essential and nontrivial measurability result: that $\theta_0$ is a \emph{measurable} function of $(X_1,X_2,\dotsc)$, i.e., that there is a measurable function $f$ such that with probability $1$, $\,\theta_0 = f(X_1,X_2,\ldots)$. In other words, given infinite data, the true parameter can be recovered in a measurable way.

Doob's theorem has been criticized on the grounds that it only guarantees consistency on a set with prior probability $1$, and thus, if for example one chose a prior with all of its mass on a single point $\theta^*$, then the theorem would apply but consistency would fail everywhere except at $\theta^*$ \citep{barron1999consistency}. While it is true that the theorem is not very useful for a poorly chosen prior, this misses the point, which is that for a well-chosen prior, consistency can be guaranteed on a very large set. For example, if $\Theta$ is a countable union of finite-dimensional spaces, then one can choose a prior with a density with respect to Lebesgue measure that is positive on all of $\Theta$, and Doob's theorem can be used to guarantee consistency everywhere except on a set of Lebesgue measure zero, assuming identifiability. This approach was used by \citet{Nobile_1994} to prove consistency for finite mixture models with a prior on the number of components, under very general conditions. 

However, this approach has the weakness that one cannot say for any given point whether consistency will hold, and in this respect the criticism of Doob's theorem is valid.  Further, Doob's theorem is less useful when $\theta$ is infinite-dimensional, since the set on which consistency may fail can be very large, even for a seemingly well-chosen prior \citep{freedman1963asymptotic}.  These limitations were the impetus for the beginnings of the modern approach to studying the consistency of nonparametric Bayesian procedures \citep{schwartz1965bayes}.  Nonetheless, for many models arising in practice, Doob's theorem is an extremely easy method of guaranteeing that consistency will hold almost everywhere.

The original justification given by \citet{Doob_1949} was hardly even a sketch, providing only the outline of a proof.  More complete proofs of Doob's theorem have been provided for special cases: \citet{van1998asymptotic} (Theorem 10.10) assumes $\Theta$ is a Euclidean space and proves the part of the theorem regarding posterior concentration. \citet{Ghosh} (Theorem 1.3.2) prove the posterior concentration part of the theorem, but they leave many details to the reader. \citet{ghosal2017fundamentals} (Theorem 6.9, 6.10) provide a proof for the general case, but their treatment is brief and many details are left to the reader.

\citet{lijoi2004extending} prove an interesting result related to Doob's theorem in the context of posterior inference for a density function $f_0$ given i.i.d.\ samples $X_1,X_2,\ldots\sim f_0$. They show that if the prior has full Hellinger support then the posterior concentrates in Hellinger neighborhoods of an ``essentially unique'' random density $\tilde g$ with the property that the data are conditionally i.i.d.\ given $\tilde g$. 

The purpose of this note is to provide a complete and detailed proof of Doob's theorem in full generality.
The rest of the paper is organized as follows.
In Section \ref{section:theorem}, we state the main results, and Section \ref{section:main-proofs} contains the proofs of the main results. 
In Section \ref{section:recovery}, we prove the key measurability result that is used in the proofs.
In Section \ref{section:details}, for completeness, we prove some supporting results.

\section{Main results}
\label{section:theorem}

Let $\Xhat$ and $\widehat\Theta$ be complete separable metric spaces. Let $\X\subseteq\Xhat$ and $\Theta\subseteq\widehat\Theta$ be Borel measurable subsets, and let $\A$ and $\B$ be the induced Borel $\sigma$-algebras of $\X$ and $\Theta$, respectively. For each $\theta\in\Theta$, let $P_\theta$ be a probability measure on $(\X,\A)$. Let $\Pi$ be a probability measure on $(\Theta,\B)$.

\begin{assumption}
\label{assumption:main}
Assume
\begin{enumerate}
\item $\theta\mapsto P_\theta(A)$ is measurable for every $A\in\A$, and
\item $\theta\neq\theta'  \implies  P_\theta\neq P_{\theta'}$.
\end{enumerate}
\end{assumption}
In other words, condition (1) is that $\{P_\theta:\theta\in\Theta\}$ is a measurable family, and
condition (2) is that $\theta$ is identifiable: if $\theta,\theta'\in\Theta$ such that $\theta\neq\theta'$ then there exists $A\in\A$ such that $P_\theta(A)\neq P_{\theta'}(A)$.

As indicated in the introduction, Doob's theorem has two parts: the first concerning consistent estimation (Theorem~\ref{theorem:estimators}), and the second concerning posterior consistency (Theorem~\ref{theorem:posterior}). A word on notation: we use $\theta$ for fixed values in $\Theta$, and $\T$ for a random variable taking values in $\Theta$.
Let $\mu$ be the joint distribution of $((X_1,X_2,\ldots),\T)$ defined by letting $\T\sim\Pi$ and $X_1,X_2,\ldots|\T$ i.i.d.\ $\sim P_{\T}$.

\begin{theorem}[Doob's theorem for estimators]
\label{theorem:estimators}
Let $(X,\T)\sim \mu$, where $X =(X_1,X_2,\dotsc)$. If $g:\Theta\to \R$ is a measurable function such that $\E|g(\T)|<\infty$, and Assumption~\ref{assumption:main} holds, then 
$$\lim_{n\to\infty} \E(g(\T) \mid X_1,\dotsc,X_n) = g(\T) \,\,\, \mbox{a.s.}[\mu].$$
\end{theorem}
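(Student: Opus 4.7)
The plan is to combine Lévy's upward martingale convergence theorem with a measurable-recovery result that $\T$ can be reconstructed from $(X_1, X_2, \ldots)$ up to a $\mu$-null set. The convergence itself is almost immediate from martingale theory; essentially all of the substance lies in the measurability step, which is exactly the recovery result flagged in the introduction and (according to the paper's outline) proved in Section~\ref{section:recovery}.

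First I would set $\mathcal{F}_n = \sigma(X_1,\dotsc,X_n)$ and $\mathcal{F}_\infty = \sigma(X_1,X_2,\dotsc)$, and consider the Doob martingale $M_n = \E(g(\T)\mid\mathcal{F}_n)$. Since $\E|g(\T)|<\infty$, the family $\{M_n\}$ is uniformly integrable, so by Lévy's upward theorem,
\[
M_n \longrightarrow \E(g(\T)\mid\mathcal{F}_\infty) \quad \text{a.s.\ and in } L^1.
\]
Thus the theorem reduces to showing that $\E(g(\T)\mid\mathcal{F}_\infty) = g(\T)$ a.s., and for this it is sufficient to prove that $g(\T)$ is $\mathcal{F}_\infty$-measurable (modulo $\mu$-null sets).

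The hard part is this measurability claim, because a priori $\T$ is only defined on the parameter coordinate and need not be a function of the data. The strategy is to appeal to the key recovery result of Section~\ref{section:recovery}: under Assumption~\ref{assumption:main}, there exists a measurable map $f\colon\XI\to\widehat\Theta$ such that $\T = f(X_1,X_2,\ldots)$ a.s.\ $[\mu]$. Granting this, $g(\T) = g\circ f(X_1,X_2,\ldots)$ a.s., which is $\mathcal{F}_\infty$-measurable since $g$ and $f$ are both measurable; hence $g(\T)$ equals its own conditional expectation given $\mathcal{F}_\infty$, completing the argument. The heuristic behind the recovery map is that identifiability plus the measurable family property allow one to reconstruct $P_\T$ from the empirical distributions $n^{-1}\sum_{i\le n}\delta_{X_i}$ via the SLLN on a countable generating algebra of $\A$, and then to invert the identifiable map $\theta\mapsto P_\theta$ measurably using the Borel structure inherited from the Polish space $\widehat\Theta$.

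The main obstacle I expect, and the reason the measurability step cannot simply be taken for granted, is the measurable inversion of $\theta\mapsto P_\theta$: the range of this map in the space of probability measures need not be a Borel set in any convenient topology, so one cannot just write $f = (\theta\mapsto P_\theta)^{-1}$. This is precisely what the argument in Section~\ref{section:recovery} must supply, typically by constructing $f$ directly on $\XI$ rather than by literal inversion, and checking measurability by a countable generating argument. Once that result is in hand, the proof of Theorem~\ref{theorem:estimators} is just the martingale-convergence step above.
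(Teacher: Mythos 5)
Your proposal is correct and follows essentially the same route as the paper: uniformly integrable (Lévy upward) martingale convergence reduces the theorem to identifying $\E(g(\T)\mid X)$ with $g(\T)$, and the measurable recovery theorem of Section~\ref{section:recovery} supplies the $\mathcal{F}_\infty$-measurable version $g\circ f(X)$ that makes this identification valid almost surely. Your parenthetical ``modulo $\mu$-null sets'' matches the paper's own technical remark that $g(\T)$ itself need not be $\sigma(X)$-measurable since the product space need not be complete.
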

\begin{remark} The generalization to vector-valued $g:\Theta\to \R^k$ follows by applying the theorem to each coordinate separately. 
Also, note that the condition on $g$ is equivalent to requiring $g \in L^1(\Pi)$.
\end{remark}

The following corollary is equivalent to Theorem~\ref{theorem:estimators}, but is presented from a frequentist perspective.

\begin{corollary}
\label{corollary:estimators}
Suppose $g\in L^1(\Pi)$ and Assumption~\ref{assumption:main} holds. There exists $\Theta_0\subseteq\Theta$ with $\Pi(\Theta_0) = 1$ such that for all $\theta_0\in\Theta_0$, if $X_1,X_2,\dotsc\sim P_{\theta_0}$ i.i.d.\ then 
$$\lim_{n\to\infty} \E(g(\T) \mid X_1,\dotsc,X_n) = g(\theta_0) \,\,\, \mbox{a.s.}[P_{\theta_0}]$$ 
where the conditional expectation is computed under $\mu$.
\end{corollary}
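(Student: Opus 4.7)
The plan is to deduce this corollary from Theorem~\ref{theorem:estimators} by a Fubini-style disintegration, converting the single joint almost-sure statement under $\mu$ into a statement about $P_{\theta_0}$-almost sure convergence for $\Pi$-almost every $\theta_0$.

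First I would fix a specific measurable version of the conditional expectation: by the Doob--Dynkin factorization lemma, there exist measurable functions $\phi_n : \X^n \to \R$ such that $\E(g(\T) \mid X_1,\dotsc,X_n) = \phi_n(X_1,\dotsc,X_n)$ $\mu$-almost surely. Define the product-measurable event
$$E = \bigl\{(x,\theta) \in \XI \times \Theta : \lim_{n\to\infty} \phi_n(x_1,\dotsc,x_n) \text{ exists and equals } g(\theta)\bigr\}.$$
Theorem~\ref{theorem:estimators} gives $\mu(E) = 1$. Because $\mu$ is constructed by drawing $\T\sim\Pi$ and then $X\mid\T=\theta \sim \PI$, Fubini (equivalently, the tower property with respect to $\sigma(\T)$) yields
$$1 = \mu(E) = \int_\Theta \PI(E_\theta)\,d\Pi(\theta),$$
where $E_\theta = \{x : (x,\theta)\in E\}$ denotes the $\theta$-section. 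Since each $\PI(E_\theta) \le 1$, we obtain $\PI(E_\theta) = 1$ for $\Pi$-almost every $\theta$. Setting $\Theta_0 = \{\theta \in \Theta : \PI(E_\theta) = 1\}$ gives $\Pi(\Theta_0)=1$, and for any $\theta_0 \in \Theta_0$, drawing $X_1,X_2,\dotsc\sim P_{\theta_0}$ i.i.d.\ forces $(X_1,X_2,\dotsc)\in E_{\theta_0}$ almost surely, i.e., $\phi_n(X_1,\dotsc,X_n) \to g(\theta_0)$ a.s.\ $[P_{\theta_0}]$, which is exactly the claim.

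The main technical obstacle is measurability, not analysis: I need $\theta\mapsto\PI(E_\theta)$ to be measurable so that the Fubini integral makes sense and $\Theta_0$ is $\B$-measurable. This requires promoting Assumption~\ref{assumption:main}(1) from the measurability of $\theta\mapsto P_\theta(A)$ on $\A$ to the measurability of $\theta\mapsto\PI(B)$ for every $B$ in the product $\sigma$-algebra on $\XI$. I would establish this by a standard $\pi$--$\lambda$ (or monotone class) argument: measurability is clear on finite-dimensional measurable rectangles via products of $\theta\mapsto P_\theta(A_i)$, and the class of $B$ for which $\theta\mapsto\PI(B)$ is measurable forms a $\lambda$-system containing these rectangles, hence the full product $\sigma$-algebra. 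Presumably this is precisely the kind of supporting result the author defers to Section~\ref{section:details}, and once it is in hand the corollary reduces to the one-line Fubini deduction above.
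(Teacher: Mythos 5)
Your proposal is correct and follows essentially the same route as the paper: take the full-measure product-measurable set from Theorem~\ref{theorem:estimators}, disintegrate $\mu(E)=\int_\Theta \PI(E_\theta)\,d\Pi(\theta)$, and conclude $\PI(E_\theta)=1$ for $\Pi$-almost every $\theta$, with the measurability of $\theta\mapsto\PI(E_\theta)$ supplied by the $\pi$--$\lambda$ arguments the paper defers to Section~\ref{section:details}. Your extra step of fixing versions $\phi_n$ via Doob--Dynkin is just a more explicit rendering of the paper's convention that ``a.s.'' means the property holds on a measurable set of measure one.
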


\begin{theorem}[Doob's theorem for posterior consistency]
\label{theorem:posterior}
If Assumption~\ref{assumption:main} holds, then there exists $\Theta_1\subseteq\Theta$ with $\Pi(\Theta_1) = 1$ such that the posterior is consistent at every $\theta_1\in\Theta_1$. That is, for all $\theta_1\in\Theta_1$, if $X_1,X_2,\dotsc\sim P_{\theta_1}$ i.i.d.\ then for any neighborhood $B$ of $\theta_1$, we have 
$$\lim_{n\to\infty} \PP(\T\in B \mid X_1,\dotsc,X_n) = 1 \,\,\, \mbox{a.s.}[P_{\theta_1}]$$
where $\PP(\T\in B \mid X_1,\dotsc,X_n)=\E(I_B(\T) \mid X_1,\dotsc,X_n)$ is computed under $\mu$.
\end{theorem}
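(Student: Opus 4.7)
The plan is to reduce Theorem~\ref{theorem:posterior} to Corollary~\ref{corollary:estimators} applied to a countable family of indicator functions. The key observation is that $\widehat\Theta$, being a separable metric space, has a countable base, so restricting to $\Theta$ yields a countable base $\{B_k : k \in \N\}$ for the subspace topology on $\Theta$. Since each $B_k$ is open in $\Theta$, hence Borel, the indicator $g_k := I_{B_k}$ is bounded and $\B$-measurable, so $g_k \in L^1(\Pi)$.

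Next, for each $k$ I would apply Corollary~\ref{corollary:estimators} with $g = g_k$: there exists $\Theta_0^{(k)} \subseteq \Theta$ with $\Pi(\Theta_0^{(k)}) = 1$ such that for every $\theta_0 \in \Theta_0^{(k)}$, if $X_1,X_2,\ldots \sim P_{\theta_0}$ i.i.d., then
$$\lim_{n\to\infty} \PP(\T \in B_k \mid X_1,\dotsc,X_n) = I_{B_k}(\theta_0) \quad \mbox{a.s.}[P_{\theta_0}].$$
I would then define $\Theta_1 := \bigcap_{k\in\N} \Theta_0^{(k)}$, which satisfies $\Pi(\Theta_1) = 1$ by countable subadditivity applied to the complements. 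The point of taking the intersection is that on $\Theta_1$ the above convergence holds \emph{simultaneously} for every base element $B_k$, outside a single $P_{\theta_0}$-null set (the countable union of the bad sets for each $k$).

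Finally, for any $\theta_1 \in \Theta_1$ and any open neighborhood $B$ of $\theta_1$, the base property produces some $k$ with $\theta_1 \in B_k \subseteq B$. Since $I_{B_k}(\theta_1)=1$, we get $\PP(\T\in B_k \mid X_1,\dotsc,X_n) \to 1$ a.s.$[P_{\theta_1}]$, and monotonicity of probability gives
$$\PP(\T \in B \mid X_1,\dotsc,X_n) \;\geq\; \PP(\T \in B_k \mid X_1,\dotsc,X_n) \;\longrightarrow\; 1,$$
while the left side is at most $1$, yielding the desired limit. For a general neighborhood $B$ of $\theta_1$ (in the sense of a set containing an open set around $\theta_1$), one first passes to an open subneighborhood, then applies the same argument.

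I do not anticipate a serious obstacle, since once Theorem~\ref{theorem:estimators}/Corollary~\ref{corollary:estimators} is in hand, the remaining work is topological bookkeeping. The two mild points to verify are (i) that a countable base of $\widehat\Theta$ restricted to $\Theta$ is a countable base for $\Theta$ with the subspace topology (immediate from the definition), and (ii) that a single $\Pi$-null exceptional set suffices for all neighborhoods of all points, which is exactly what the countable intersection $\Theta_1 = \bigcap_k \Theta_0^{(k)}$ accomplishes.
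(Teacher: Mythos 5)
Your proposal is correct and follows essentially the same route as the paper: apply Corollary~\ref{corollary:estimators} to the indicators of a countable base for $\Theta$, intersect the resulting full-measure sets to get $\Theta_1$, and conclude via a base element $C$ with $\theta_1\in C\subseteq B$ together with monotonicity of the conditional probabilities. No gaps worth noting.
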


\section{Proofs of the main results}
\label{section:main-proofs}

First, we make some clarifying remarks and initial observations.
Let $\Xhat^\infty =\Xhat\times\Xhat\times\cdots =\{(x_1,x_2,\dotsc): x_1,x_2,\dotsc\in\Xhat\}$.
Then $\Xhat^\infty$ can be given a metric under which it is complete (as a metric space) and has the product topology. (For instance, if $d$ is the metric on $\Xhat$, then it can be verified that $D(x,y) =\sup_k \min\{1,d(x_k,y_k)\}/k$ is a metric on $\Xhat^\infty$ satisfying these properties.) 
It follows that $\Xhat^\infty$ is separable. 
Therefore, $\XI=\X\times\X\times\cdots$ is a Borel measurable subset of a complete separable metric space, $\Xhat^\infty$. Further, the induced Borel $\sigma$-algebra on $\XI$ coincides with the product $\sigma$-algebra $\AI$, which is generated by sets of the form 
$$A_1\times\cdots\times A_k\times\X\times\X\times\cdots =\{x\in\XI: x_1\in A_1,\dotsc,x_k\in A_k\} $$
where $A_1,\dotsc,A_k\in\A$ and $k\in\{1,2,\dotsc\}$. For each $\theta\in\Theta$, let $\PI$ be the product measure on $(\XI,\AI)$ obtained from $P_\theta$ --- that is, by the Kolmogorov extension theorem, let $\PI$ be the unique probability measure on $(\XI,\AI)$ such that
$$\PI(A_1\times\cdots\times A_k\times\X\times\X\times\cdots) = P_\theta(A_1)\cdots P_\theta(A_k) $$
for all $A_1,\dotsc,A_k\in\A$ and all $k\in\{1,2,\dotsc\}$.

The joint distribution $\mu$, defined in Section~\ref{section:theorem}, is equal to the probability measure on $(\XI\times\Theta,\AI\otimes\B)$ defined by
$$\mu(E) =\int_\Theta \PI(E_\theta) \, d\Pi(\theta)$$
for $E\in\AI\otimes\B$, where $E_\theta =\{x\in\XI:(x,\theta)\in E\}$ is the $\theta$-section of $E$. 
In order for this definition to make sense, we need $\theta\mapsto \PI(E_\theta)$ to be a measurable function for all $E\in\AI\otimes\B$; in Section~\ref{section:details}, we prove that this is indeed the case, as long as condition (1) of Assumption~\ref{assumption:main} holds.

Similarly, giving $\Xhat^\infty\times\widehat\Theta$ a metric that makes it a complete separable metric space with the product topology, it follows that $\XI\times\Theta$ is a Borel measurable subset of $\Xhat^\infty\times\widehat\Theta$, and the induced Borel $\sigma$-algebra on $\XI\times\Theta$ coincides with the product $\sigma$-algebra $\AI\otimes\B$.

\begin{proof}[\bf Proof of Theorem \ref{theorem:estimators}]
Since $\E|g(\T)|<\infty$, $\E(g(\T) \mid X_1,\dotsc,X_n)$ is a uniformly integrable martingale, and thus
$$\lim_{n\to\infty} \E(g(\T) \mid X_1,\dotsc,X_n) = \E(g(\T) \mid X) \,\,\,\,\, \mbox{a.s.}$$
by, e.g., \citet{Kallenberg}, 7.23. In Section \ref{section:recovery}, Theorem \ref{theorem:recovery}, we prove that there exists $f:\XI\to\Theta$ measurable such that $f(x) =\theta$ for $\mu$-almost all $(x,\theta)$. Defining $h = g\circ f$, we have $h:\XI\to\R$ measurable such that $h(x) = g(\theta)$ a.e.$[\mu]$, and hence $h(X) = g(\T)$ almost surely. Note that $\sigma(X) =\{A\times\Theta: A\in\AI\}\subseteq\AI\otimes\B$.  Now, $h(X)$ is a version of $\E(g(\T) | X)$, since it is $\sigma(X)$-measurable and for any $F\in\sigma(X)$ we have
$$\int_F g(\theta) d\mu(x,\theta) =\int_F h(x) d\mu(x,\theta). $$
Hence, $\E(g(\T) | X) = h(X) = g(\T)$ almost surely.
\end{proof}

A minor technical point regarding the preceding proof: even though $\E(g(\T) | X) = g(\T)$ almost surely, $g(\T)$ is not necessarily a version of $\E(g(\T) | X)$ since $g(\T)$ is not necessarily $\sigma(X)$-measurable. It is true that $h(x) = g(\theta)$ a.e.$[\mu]$ and $h$ is Borel measurable, however, this is not sufficient to guarantee $\sigma(X)$-measurability of $g(\T)$ since $(\XI\times\Theta,\AI\otimes\B,\mu)$ is not necessarily a complete measure space. When working in a probability measure space that is not necessarily complete, we follow the usual custom of using ``almost everywhere'' (and ``almost surely'' for random variables) to signify that there exists a measurable set of measure 1 on which the condition under consideration holds.

\begin{proof}[\bf Proof of Corollary \ref{corollary:estimators}]
By Theorem~\ref{theorem:estimators}, there exists a set $F\in\AI\otimes\B$ of $\mu$-probability 1 such that for all $(x,\theta)\in F$,
$$\lim_{n\to\infty} \E(g(\T) \mid x_1,\dotsc,x_n) = g(\theta).$$ 
Then $1 = \mu(F) =\int_\Theta \PI(F_\theta)\, d\Pi(\theta)$, and therefore $\int_\Theta (1-\PI(F_\theta)) d\Pi(\theta)= 0$. Since $1-\PI(F_\theta)\geq 0$ for all $\theta$, then $\PI(F_\theta) = 1$ a.e.$[\Pi]$ by, e.g., \citet{Kallenberg}, 1.24. In other words, there exists $\Theta_0\in\B$ with $\Pi(\Theta_0) = 1$ such that $\PI(F_\theta) = 1$ for all $\theta\in\Theta_0$.
\end{proof}

With this corollary in hand, we are well-positioned to prove the posterior consistency part of Doob's theorem.

\begin{proof}[\bf Proof of Theorem \ref{theorem:posterior}]
Let $\C\subseteq\B$ be a countable base for the topology of $\Theta$, using the fact that $\Theta$ is a separable metric space. (For example, take the open balls of radius $1/k$ about every point in a countable dense subset, for all $k\in\{1,2,\dotsc\}$.) For each $C\in\C$, choose $\Theta_C$ according to Corollary \ref{corollary:estimators} with $g(\theta) = I_C(\theta)$, so that $\Pi(\Theta_C) = 1$ and for each $\theta\in\Theta_C$, 
$$\E(I_C(\T) \mid X_1,\dotsc,X_n)\xrightarrow[n\to\infty]{} I_C(\theta)\,\,\, \mbox{a.s.}$$
when $X_1,X_2,\dotsc$ i.i.d.\ $\sim P_\theta$. Let 
$$\Theta_1 =\I_{C\in\C}\Theta_C.$$
Then $\Pi(\Theta_1) = 1$ since $\C$ is countable. Let $\theta_1\in\Theta_1$ and let $B$ be a neighborhood of $\theta_1$. Since $\C$ is a base, there exists $C\in\C$ such that $\theta_1\in C\subseteq B$. Therefore, if $X_1,X_2,\dotsc\sim P_{\theta_1}$ i.i.d.\ then 
$$\E(I_B(\T) \mid X_1,\dotsc,X_n)\geq\E(I_C(\T) \mid X_1,\dotsc,X_n)\xrightarrow[n\to\infty]{} I_C(\theta_1) = 1  \,\,\, \mbox{a.s.}$$
since $I_B(\theta)\geq I_C(\theta)$ and $\theta_1\in\Theta_1 \subseteq\Theta_C$; hence, $\PP(\T\in B \mid X_1,\dotsc,X_n)\to 1$ almost surely.
\end{proof}

\section{Measurable recovery}
\label{section:recovery}

In the proof of Theorem \ref{theorem:estimators}, we postponed a key step: the existence of $f$. Although it may at first appear to be a simple technical detail, this turns out to be surprisingly nontrivial --- or at least, it seems to necessitate the use of a highly nontrivial result (Theorem \ref{theorem:inverse}). 

Our proof below is modeled after \citet{Ghosh}, who provided a proof of the posterior consistency part of Doob's theorem.

\begin{theorem}[Measurable recovery]
\label{theorem:recovery}
Under the setup of Sections~\ref{section:theorem} and~\ref{section:main-proofs},
there exists $f:\XI\to\Theta$ measurable such that $f(x) =\theta$ a.e.$[\mu]$.
\end{theorem}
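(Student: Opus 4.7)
The plan is to build $f$ as a composition $f = \Psi^{-1}\circ\phi$, where $\phi:\XI\to Y$ is a measurable ``empirical'' map that recovers $P_\theta$ from the sample $x$ for $\mu$-a.e.\ $(x,\theta)$, and $\Psi:\Theta\to Y$ is the Borel-measurable, injective parameter-to-distribution map. Since $\Theta$ and $Y$ will both be standard Borel, the hard step is to produce a measurable inverse of $\Psi$ on its image; this is precisely the role to be played by the forthcoming Theorem~\ref{theorem:inverse}, which I expect to be a form of the Lusin--Souslin theorem on Borel images of injective Borel maps between standard Borel spaces.

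To construct $\phi$, first fix a countable algebra $\A_0\subseteq\A$ that generates $\A$; such an algebra exists because $\X$ is a separable metric space, so $\A$ is countably generated (one can take the Boolean algebra generated by a countable base for the topology). Take $Y=[0,1]^{\A_0}$ with its product $\sigma$-algebra; since $\A_0$ is countable, $Y$ is a compact metrizable space whose Borel and product $\sigma$-algebras coincide, and hence $Y$ is standard Borel. Define
$$\phi(x)(A) \,=\, \limsup_{n\to\infty}\frac{1}{n}\sum_{i=1}^n I_A(x_i), \qquad A\in\A_0,$$
which is measurable coordinate-wise (each coordinate depending measurably on finitely many $x_i$), hence measurable as a map $\XI\to Y$. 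By the strong law of large numbers applied under $\PI$ to the i.i.d.\ sequence $I_A(X_1),I_A(X_2),\dotsc$ for each fixed $A\in\A_0$, together with the countability of $\A_0$, we obtain $\phi(x)=(P_\theta(A))_{A\in\A_0}$ for $\PI$-a.e.\ $x$; by Fubini this holds for $\mu$-a.e.\ $(x,\theta)$.

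Next, define $\Psi:\Theta\to Y$ by $\Psi(\theta)=(P_\theta(A))_{A\in\A_0}$. Each coordinate is measurable by Assumption~\ref{assumption:main}(1), so $\Psi$ is measurable into the product $\sigma$-algebra on $Y$. It is also injective: by Assumption~\ref{assumption:main}(2), distinct parameters yield distinct measures on $\A$, and by Caratheodory uniqueness of extension from the generating algebra $\A_0$ to $\A$, two such measures must already differ on some element of $\A_0$. Now apply Theorem~\ref{theorem:inverse} to the Borel injection $\Psi$ between the standard Borel spaces $\Theta$ and $Y$: the image $\Psi(\Theta)$ is Borel measurable in $Y$, and $\Psi^{-1}:\Psi(\Theta)\to\Theta$ is measurable. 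Fix any $\theta_*\in\Theta$ and set $f(x)=\Psi^{-1}(\phi(x))$ when $\phi(x)\in\Psi(\Theta)$ and $f(x)=\theta_*$ otherwise. Then $f:\XI\to\Theta$ is measurable, and by the two preceding paragraphs $f(x)=\theta$ for $\mu$-a.e.\ $(x,\theta)$.

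The main obstacle is the measurable inversion step. Although ``the inverse of an injective Borel map should be Borel'' sounds innocuous, in general the image of a Borel injection need not be Borel in the codomain, and the fact that it is Borel (and the inverse is measurable) when the domain is standard Borel is the deep Lusin--Souslin theorem. Everything else --- the countably generated $\sigma$-algebra, coordinate-wise SLLN, measurability of $\Psi$, and Fubini --- is routine once the standard Borel framework is in place.
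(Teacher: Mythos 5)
Your proof is correct, and it takes a genuinely different route from the paper's. The paper works entirely inside $\XI\times\Theta$: it forms the set $E=\bigcap_{A\in\C}F^A$ of pairs $(x,\theta)$ whose empirical frequencies converge to $P_\theta(A)$ for all $A$ in a countable generating $\pi$-system, checks that $E$ is measurable, has $\mu(E)=1$, and that the coordinate projection $\pi_x$ is injective on $E$, and then applies the Lusin--Souslin theorem (Theorem~\ref{theorem:inverse}) to $\pi_x|_E$ to invert the projection. You instead factor through an auxiliary standard Borel space $Y=[0,1]^{\A_0}$ of ``moments'': an empirical map $\phi:\XI\to Y$ (using $\limsup$ to keep it everywhere defined and measurable, with the SLLN giving $\phi(x)=\Psi(\theta)$ for $\mu$-a.e.\ $(x,\theta)$) and the globally injective parameter-to-distribution map $\Psi:\Theta\to Y$, to which you apply Theorem~\ref{theorem:inverse} directly; injectivity of $\Psi$ comes from identifiability plus uniqueness of measures agreeing on a generating algebra, exactly the same uniqueness step the paper uses to show the sections $E_\theta$ are disjoint. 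Your decomposition cleanly separates ``recover $P_\theta$ from the data'' from ``recover $\theta$ from $P_\theta$,'' which is the heuristic the paper itself describes, at the mild cost of introducing $Y$ and verifying it is standard Borel; the paper's version avoids the auxiliary space but must instead establish properties (a)--(c) of the set $E$. Two small points you should make explicit to match the paper's standard of rigor: the $\mu$-a.e.\ statement requires the set $\{(x,\theta):\phi(x)=\Psi(\theta)\}$ to lie in $\AI\otimes\B$, which holds because $Y$ is separable metrizable (so the diagonal in $Y\times Y$ is Borel) and both $\phi\circ\pi_x$ and $\Psi\circ\pi_\theta$ are measurable, after which $\mu$ of this set equals $1$ by integrating its $\theta$-sections as in the paper's definition of $\mu$; and the coordinate maps of $\phi$ are measurable as $\limsup$s of measurable functions (the individual averages, not the $\limsup$ itself, depend on finitely many coordinates).
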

In other words, there exists $f:\XI\to\Theta$ measurable such that there is some $F\in\AI\otimes\B$ with $\mu(F) = 1$ satisfying $f(x) =\theta$ for all $(x,\theta)\in F$.

The difficult part of the theorem is ensuring that $f$ is measurable. Otherwise, it is relatively simple, since if $X_1,X_2,\dotsc\sim P_\theta$ i.i.d., then from the infinite sequence $X =(X_1,X_2,\dotsc)$ one can almost surely recover $P_\theta$ (using the strong law of large numbers and separability of $\X$), and from $P_\theta$ one can recover $\theta$ (by identifiability, condition (2) of Assumption~\ref{assumption:main}). This would determine a map from $X$ to $\theta$, but it comes with no guarantee of measurability.  Although measurability is often a technical detail that can be assumed away, one should bear in mind that when dealing with conditional expectations, measurability is of the essence.

The following innocent-looking but deep theorem plays a central role in the proof. 

\begin{theorem}[Lusin, Souslin, Kuratowski]
\label{theorem:inverse}
Let $\UU$ and $\V$ be Borel measurable subsets of complete separable metric spaces, and give them the induced Borel $\sigma$-algebras. If $E\subseteq\UU$ is measurable, and $g:\UU\to\V$ is a measurable function such that the restriction $g|_E$ is one-to-one, then $g(E)$ is measurable and $g|_E^{-1}:g(E)\to\UU$ is measurable.
\end{theorem}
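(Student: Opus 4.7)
The plan is to reduce the theorem to the single claim that $g(E)$ is Borel in $\V$, and then to prove that claim using Souslin's separation theorem for analytic sets.

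First, I would argue that the measurability of $g|_E^{-1}$ is a formal consequence of the image-is-Borel assertion. To show $g|_E^{-1}:g(E)\to\UU$ is Borel measurable, we need $(g|_E^{-1})^{-1}(A)=g(A\cap E)$ to be Borel in $g(E)$ for every Borel $A\subseteq\UU$. Since $A\cap E$ is Borel in $\UU$ and $g$ is still one-to-one on it, once we know that $g$ sends any Borel set on which it is injective to a Borel set in $\V$, this follows.

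Next, I would make standard reductions. Since $\UU$ and $\V$ are Borel subsets of complete separable metric spaces, each carries the structure of a standard Borel space. By Kuratowski's topology-refinement theorem, I may replace the topology on $\UU$ by a finer Polish topology, with the same Borel $\sigma$-algebra, in which $E$ is closed and $g$ is continuous. Replacing $\UU$ by $E$ itself, one may assume $\UU$ is Polish, $\V$ is Polish, and $g:\UU\to\V$ is a continuous injection, so that $g(\UU)$ is analytic in $\V$.

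The core step is an iterative Souslin-scheme construction. Partition $\UU$ into pairwise disjoint Borel sets $\UU_n$ of diameter at most $1$; the images $g(\UU_n)$ are pairwise disjoint analytic subsets of $\V$, and Souslin's separation theorem yields pairwise disjoint Borel sets $B_n\supseteq g(\UU_n)$. Recursing inside each $\UU_n$ with diameter bounds $1/2,1/4,\ldots$, and at each stage additionally intersecting the separating Borel sets with a countable cover of $\V$ by Borel sets of correspondingly small diameter, one builds a Souslin scheme $\{B_s:s\in\N^{<\N}\}$ of Borel sets with $\mathrm{diam}(B_s)\to 0$ along every branch and $g(\UU_s)\subseteq B_s$. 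Injectivity of $g$ together with completeness of $\V$ then forces each infinite branch to have either empty intersection or a single point of $g(\UU)$, yielding $g(\UU)=\bigcap_k\bigcup_{s\in\N^k}B_s$, a Borel set.

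The hard part is Souslin's separation theorem itself: two disjoint analytic sets can always be separated by a Borel set. This is the nontrivial descriptive-set-theoretic input, proved by a transfinite analysis of the Souslin operation; given it, the reductions and the recursive construction above are routine, while without it no amount of formal $\sigma$-algebra manipulation suffices.
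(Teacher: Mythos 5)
The paper does not actually prove this theorem: it defers entirely to the literature, citing \citet{Kechris} (Corollary 15.2) and \citet{parthasarathy2005probability} (Corollary 3.3). What you have written is a sketch of the standard descriptive-set-theoretic proof of the underlying Lusin--Souslin theorem (injective Borel images of Borel sets are Borel, i.e.\ Kechris's Theorem 15.1, from which Corollary 15.2 follows exactly by your first reduction), so your route is ``genuinely different'' only in the sense that you supply the argument the paper outsources. Your architecture is the right one: reduce measurability of $g|_E^{-1}$ to the image-is-Borel statement; use Kuratowski's topology-refinement theorem to replace $E$ by a Polish space on which $g$ is continuous and injective; then run a Souslin scheme whose nodes are separated by Borel sets, with Souslin separation as the essential nontrivial input (for countably many pairwise disjoint analytic sets one separates each $g(\UU_n)$ from the analytic union of the others and then disjointifies).

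Two details in the final step deserve repair. First, if you intersect the separating Borel sets with a small-diameter Borel cover of $\V$, you lose the containment $g(\UU_s)\subseteq B_s$ unless you simultaneously refine the domain partition by the preimages of that cover (or, as in the textbook proof, intersect $B_s$ with $\overline{g(\UU_s)}$ instead). Second, the conclusion that a nonempty branch intersection consists of a single point \emph{of} $g(\UU)$ does not follow from injectivity plus completeness of $\V$: completeness of $\V$ only identifies the branch limit as a point of the ambient space. The correct argument uses the shrinking diameters you imposed on the domain pieces $\UU_s$, completeness of $\UU$ in its refined Polish topology, and continuity of $g$: choose $x_k\in\UU_{\beta|k}$ along the branch through $y$, note that $(x_k)$ is Cauchy and converges to some $x\in\UU$, and conclude $g(x)=y$ since $g(x_k)\to g(x)$ while $g(x_k)$ and $y$ both lie in sets of vanishing diameter (or in $\overline{g(\UU_{\beta|k})}$). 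With these fixes your sketch is exactly the proof behind the citation; what the paper's choice buys is brevity, while yours makes explicit that the only deep ingredient is Souslin separation.
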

\begin{proof} See \citet{Kechris}, Corollary 15.2, or \citet{parthasarathy2005probability}, Corollary 3.3, for the proof.
\end{proof}

\vspace{5pt}
\begin{proof}[\bf Proof of Theorem \ref{theorem:recovery}]
Let $\C\subseteq\A$ be a countable collection that generates $\A$, contains $\X$, and is closed under finite intersections. (For example, let $\C_0$ be a countable base for $\X$, and let $\C =\{\X\}\union\{A_1\n\cdots\n A_k:A_1,\dotsc,A_k\in\C_0,\,k = 1,2,\dotsc\}$.)  Let
$$ E =\I_{A\in\C} F^A $$
where
$$ F^A =\left\{(x,\theta)\in\XI\times\Theta:
  \lim_{n\to\infty}\frac{1}{n}\sum_{i = 1}^n I_A(x_i)  \mbox{ exists and equals } P_\theta(A)\right\}. $$
We will abuse notation slightly and write $\pi_x$ and $\pi_\theta$ for the projections from $\XI\times\Theta$ to $\XI$ and $\Theta$ respectively, i.e., $\pi_x(x,\theta) = x$ and $\pi_\theta(x,\theta) =\theta$. Note that $\pi_x$ and $\pi_\theta$ are measurable (by the definition of the product $\sigma$-algebra). We will show that:
\begin{enumerate}
\item[(a)] $E$ is measurable (i.e. $E\in\AI\otimes\B$),
\item[(b)] $\mu(E) = 1$, and
\item[(c)] for any $x\in\pi_x(E)$, there is a unique $\theta\in\Theta$ such that $(x,\theta)\in E$.
\end{enumerate}
Before proving (a), (b), and (c), let's see how to prove the result, assuming they are true. 
We will apply Theorem \ref{theorem:inverse} with $\UU =\XI\times\Theta$, $\V =\XI$, and $g =\pi_x$. Recall that $\XI$ and $\XI\times\Theta$ are Borel measurable subsets of complete separable metric spaces (in their respective Borel $\sigma$-algebras), and note that (c) means that the restriction $\pi_x|_E$ is one-to-one. Therefore, assuming (a) and (c), Theorem \ref{theorem:inverse} implies that $\pi_x(E)$ is measurable and the map $\phi:\pi_x(E)\to\XI\times\Theta$ defined by $\phi =\pi_x|_E^{-1}$ is measurable. Note that for any $(x,\theta)\in E$, we have $\phi(x) =(x,\theta)$ by construction. It is straightforward to show that $\phi$ has a measurable extension $\tilde\phi:\XI\to\XI\times\Theta$. Define $f:\XI\to\Theta$ by $f(x) =\pi_\theta(\tilde\phi(x))$. Then $f$ is measurable and for any $(x,\theta)\in E$ we have $f(x) =\pi_\theta(\tilde\phi(x)) =\pi_\theta(x,\theta) =\theta$. Along with (b), this will prove Theorem \ref{theorem:recovery}.

\vspace{10pt}
Now, we prove (a), (b), and (c).

\vspace{10pt}
{\boldface (a)} 
Let $A\in\A$. Since $\pi_\theta$ is measurable and $\theta\mapsto P_\theta(A)$ is measurable (by condition (1) in Assumption~\ref{assumption:main}), then $(x,\theta)\mapsto P_\theta(A)$ is measurable. Similarly, for any $i\in\{1,2,\dotsc\}$, since $\pi_x$ is measurable and $x\mapsto x_i$ is measurable then $(x,\theta)\mapsto I_A(x_i)$ is measurable. Hence, the function
$$ h_n(x,\theta) :=\frac{1}{n}\sum_{i = 1}^n I_A(x_i)-P_\theta(A) $$
is measurable for any $n \in \{1,2,\dotsc\}$. In terms of $h_n$, the set $F^A$ defined above becomes
$$F^A =\left\{(x,\theta)\in\XI\times\Theta:\lim_{n\to\infty} h_n(x,\theta) \mbox{ exists and equals } 0\right\}.$$
It is straightforward to check that $F^A$ is measurable, and therefore $E =\I_{A\in\C} F^A$ is measurable. 

\vspace{10pt}
{\boldface (b)} 
For $\theta\in\Theta$ and $A\in\C$, the sections $E_\theta =\{x\in\XI:(x,\theta)\in E\}$ and $F^A_\theta =\{x\in\XI:(x,\theta)\in F^A\}$ are measurable, since from the proof of (a) we know that $E$ and $F^A$ are measurable. 
Noting that
$$ F^A_\theta =\left\{x\in\XI: \lim_{n\to\infty} \frac{1}{n}\sum_{i = 1}^n I_A(x_i) \mbox{ exists and equals }  P_\theta(A)\right\},$$
the strong law of large numbers implies that for any $\theta\in\Theta$ and any $A\in\C$, we have $\PI(F^A_\theta) = 1$. Hence, $\PI(E_\theta) = 1$ for any $\theta\in\Theta$, since $E_\theta =\I_{A\in\C} F^A_\theta$ (which follows from $E =\I_{A\in\C} F^A$) and $\C$ is countable. Therefore,
$$\mu(E) =\int_\Theta\PI(E_\theta)\, d\Pi(\theta) =\int_\Theta d\Pi(\theta) = \Pi(\Theta) = 1. $$

\vspace{10pt}
{\boldface (c)} 
First, we show that $\theta\neq\theta'\implies E_\theta\n E_{\theta'} =\emptyset$. Suppose $\theta,\theta'\in\Theta$ such that $\theta\neq\theta'$. Then $P_\theta\neq P_{\theta'}$ by identifiability (condition (2) of Assumption~\ref{assumption:main}). This implies that there exists $A\in\C$ such that $P_\theta(A)\neq P_{\theta'}(A)$ (by, e.g., \citet{Kallenberg}, 1.17) since $\C$ is a $\pi$-system containing $\X$ that generates $\A$. Therefore, $F^A_\theta\n F^A_{\theta'}=\emptyset$, and hence $E_\theta\n E_{\theta'} =\emptyset$.

Now, to prove (c), suppose $x\in\pi_x(E)$. Let $\theta,\theta'\in\Theta$ such that $(x,\theta),(x,\theta')\in E$. (There is at least one such $\theta$ by the definition of $\pi_x(E)$.) Then $x\in E_\theta$ and $x\in E_{\theta'}$, and therefore $\theta =\theta'$ (by the preceding paragraph). 

This proves (a), (b), and (c), completing the proof of Theorem \ref{theorem:recovery}.
\end{proof}

\section{Supporting results}
\label{section:details}

For completeness, in this section we provide some additional details.

\subsection{Joint measures}
\label{section:joint}

Let $(\X,\A)$ and $(\Theta,\B)$ be measurable spaces, let $P_\theta$ be a probability measure on $(\X,\A)$ for each $\theta\in\Theta$, and let $\Pi$ be a probability measure on $(\Theta,\B)$. 
We would like to define a joint probability measure $\nu$ on $(\X\times\Theta,\A\otimes\B)$ by
$$\nu(E) =\int_\Theta P_\theta(E_\theta) \, d\Pi(\theta)$$
for all $E\in\A\otimes\B$, where $E_\theta =\{x\in\X:(x,\theta)\in E\}$ is the $\theta$-section of $E$. 
However, in order for this definition to make sense, we need $\theta\mapsto P_\theta(E_\theta)$ to be a measurable function. 

\begin{lemma}
\label{lemma:joint}
If $\theta\mapsto P_\theta(A)$ is measurable for every $A\in\A$, then $\theta\mapsto P_\theta(E_\theta)$ is measurable for every $E\in\A\otimes\B$.
\end{lemma}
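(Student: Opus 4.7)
The plan is a standard Dynkin $\pi$-$\lambda$ argument applied to the collection
\[
\mathcal{M} = \{E \in \A\otimes\B : \theta\mapsto P_\theta(E_\theta) \text{ is measurable}\}.
\]
First I would verify that $\mathcal{M}$ contains the $\pi$-system of measurable rectangles $\mathcal{R} = \{A\times B : A\in\A,\,B\in\B\}$, which generates $\A\otimes\B$. For $E = A\times B$, the section $E_\theta$ equals $A$ when $\theta\in B$ and $\emptyset$ otherwise, so $P_\theta(E_\theta) = P_\theta(A)\,I_B(\theta)$. This is a product of two measurable functions of $\theta$ (using the hypothesis on $\theta\mapsto P_\theta(A)$ and measurability of $I_B$), hence measurable, so $\mathcal{R}\subseteq\mathcal{M}$.

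Next I would show $\mathcal{M}$ is a $\lambda$-system. Clearly $\X\times\Theta\in\mathcal{M}$ since $P_\theta(\X) = 1$ for all $\theta$. For closure under proper differences, if $E\subseteq F$ are both in $\mathcal{M}$, then section-taking commutes with set difference, so $(F\setminus E)_\theta = F_\theta\setminus E_\theta$ and hence $P_\theta((F\setminus E)_\theta) = P_\theta(F_\theta) - P_\theta(E_\theta)$, a difference of two measurable functions. For closure under countable increasing unions, if $E_n\uparrow E$ in $\mathcal{M}$ then $(E_n)_\theta \uparrow E_\theta$, and continuity from below of $P_\theta$ gives $P_\theta(E_\theta) = \lim_n P_\theta((E_n)_\theta)$, a pointwise limit of measurable functions and therefore measurable.

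Finally, since $\mathcal{R}$ is a $\pi$-system generating $\A\otimes\B$ and $\mathcal{M}$ is a $\lambda$-system containing $\mathcal{R}$, the $\pi$-$\lambda$ theorem yields $\mathcal{M}\supseteq\sigma(\mathcal{R}) = \A\otimes\B$, which is the claim.

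There is no real obstacle here; the only thing to be careful about is that the identities used for sections ($(F\setminus E)_\theta = F_\theta\setminus E_\theta$ and $(\bigcup_n E_n)_\theta = \bigcup_n (E_n)_\theta$) are elementary, and that $E_\theta\in\A$ whenever $E\in\A\otimes\B$ (a standard fact about product $\sigma$-algebras), so the expressions $P_\theta(E_\theta)$ are well-defined throughout. No hypothesis beyond measurability of $\theta\mapsto P_\theta(A)$ for each fixed $A\in\A$ is needed, and in particular no assumption about the structure of $\Theta$ or regularity of the family $\{P_\theta\}$.
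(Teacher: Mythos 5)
Your proposal is correct and follows essentially the same route as the paper's proof: a $\pi$-$\lambda$ argument with the rectangles $\{A\times B\}$ as the generating $\pi$-system, the identity $P_\theta(E_\theta)=P_\theta(A)I_B(\theta)$ for rectangles, and closure of the good class under proper differences and increasing unions via the elementary section identities. Nothing is missing.
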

\begin{proof}
We apply the $\pi-\lambda$ theorem; see, e.g., \citet{Kallenberg}, 1.1.
The collection of rectangles $\RC =\{A\times B: A\in\A,\,\, B\in\B\}$ is a $\pi$-system. 
Let $\L$ be the collection of sets $E\in\A\otimes\B$ such that $\theta\mapsto P_\theta(E_\theta)$ is measurable. We will show that $\L$ is a $\lambda$-system containing $\RC$. Since $\sigma(\RC) =\A\otimes\B$ then $\A\otimes\B\subseteq\L$ will follow by the $\pi-\lambda$ theorem. This will prove the lemma.

To see that $\RC\subseteq\L$, let $E\in\RC$ and write $E = A\times B$ with $A\in\A$, $B\in\B$. Then 
$$E_\theta =\branch{A}{\theta\in B}{\emptyset}{\theta\not\in B,}$$
so $P_\theta(E_\theta) = P_\theta(A) I_B(\theta)$, which is measurable as a function of $\theta$. Hence, $E\in\L$.

To show that $\L$ is a $\lambda$-system, we must show that $\X\times\Theta\in\L$ and $\L$ is closed under increasing unions and proper differences. Since $\RC\subseteq\L$, then $\X\times\Theta\in\L$ is immediate. If $E^1,E^2,\dotsc\in\L$ such that $E^1\subseteq E^2\subseteq\cdots$, and $E =\U_n E^n$, then 
$P_\theta(E_\theta) = P_\theta(\U_n E^n_\theta) = \lim_{n\to\infty} P_\theta(E^n_\theta)$, which is measurable as a function of $\theta$ (being a limit of measurable functions). If $E,F\in\L$ such that $E\subseteq F$ then $P_\theta((F\setminus E)_\theta)= P_\theta(F_\theta\setminus E_\theta) = P_\theta(F_\theta)-P_\theta(E_\theta)$, which is measurable as a function of $\theta$.
\end{proof}

\subsection{Measurable families are preserved under countable products}
\label{section:product}

Now, consider the situation in Sections \ref{section:theorem} and \ref{section:main-proofs}.  Since Assumption~\ref{assumption:main} ensures that $\theta\mapsto P_\theta(A)$ is measurable for all $A\in\A$, Lemma~\ref{lemma:joint} shows that $\theta\mapsto P_\theta(E_\theta)$ is measurable for all $E\in\A\otimes\B$. However, since we would like to define the joint measure as
$$\mu(E) =\int_\Theta \PI(E_\theta) \, d\Pi(\theta)$$
on $(\XI\times\Theta,\AI\otimes\B)$, what we really need is for $\theta\mapsto \PI(E_\theta)$ to be measurable for all $E\in\AI\otimes\B$. If we can show that $\theta\mapsto \PI(A)$ is measurable for all $A\in\AI$, then this will follow from Lemma~\ref{lemma:joint} simply by replacing $(\X,\A,P_\theta)$ with $(\XI,\AI,\PI)$.

\begin{lemma}
Under the setup of Sections~\ref{section:theorem} and \ref{section:main-proofs}, $\,\theta\mapsto \PI(A)$ is measurable for every $A\in\AI$.
\end{lemma}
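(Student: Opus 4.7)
The plan is to mirror the strategy used in Lemma~\ref{lemma:joint}: apply the $\pi$-$\lambda$ theorem with the generating $\pi$-system being the collection of finite-dimensional cylinder sets.

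First, I would let $\RC$ denote the collection of cylinder rectangles
$$\RC = \{A_1\times\cdots\times A_k\times\X\times\X\times\cdots : A_1,\dotsc,A_k\in\A,\,k\geq 1\},$$
which is a $\pi$-system (the intersection of two cylinders is another cylinder, padding with $\X$ as needed) generating $\AI$, as noted at the start of Section~\ref{section:main-proofs}. Let $\L$ be the collection of $A\in\AI$ for which $\theta\mapsto \PI(A)$ is measurable. The goal is to show $\RC\subseteq\L$ and that $\L$ is a $\lambda$-system; then the $\pi$-$\lambda$ theorem gives $\AI = \sigma(\RC)\subseteq\L$.

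For the $\RC\subseteq\L$ step, the defining property of $\PI$ from the Kolmogorov extension gives
$$\PI(A_1\times\cdots\times A_k\times\X\times\cdots) = P_\theta(A_1)\cdots P_\theta(A_k),$$
which is measurable in $\theta$ as a finite product of measurable functions, using condition (1) of Assumption~\ref{assumption:main}. For the $\lambda$-system step: $\XI\in\RC\subseteq\L$ handles the total space; for an increasing union $A^n\uparrow A$ with $A^n\in\L$, continuity from below gives $\PI(A) = \lim_n \PI(A^n)$, measurable as a pointwise limit; for $E\subseteq F$ with $E,F\in\L$, finiteness of $\PI$ gives $\PI(F\setminus E) = \PI(F)-\PI(E)$, again measurable.

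I do not expect a serious obstacle here; the argument is essentially the same template as Lemma~\ref{lemma:joint}, with the only subtlety being that the natural generating class is the cylinder sets rather than arbitrary product rectangles, and one must invoke Kolmogorov's extension (or the explicit definition of $\PI$ given in Section~\ref{section:main-proofs}) to evaluate $\PI$ on a cylinder as a finite product. Once $\theta\mapsto \PI(A)$ is known to be measurable for all $A\in\AI$, the remark preceding the lemma shows that Lemma~\ref{lemma:joint}, applied with $(\X,\A,P_\theta)$ replaced by $(\XI,\AI,\PI)$, yields measurability of $\theta\mapsto \PI(E_\theta)$ for every $E\in\AI\otimes\B$, which is what is needed to define $\mu$.
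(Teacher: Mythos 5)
Your proposal is correct and follows essentially the same route as the paper: the $\pi$-$\lambda$ theorem applied to the $\pi$-system of cylinder sets, with measurability on cylinders coming from the product formula $\PI(A_1\times\cdots\times A_k\times\X\times\cdots)=P_\theta(A_1)\cdots P_\theta(A_k)$ and the $\lambda$-system properties verified via continuity from below and subtraction for proper differences. No gaps.
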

\begin{proof}
This is another application of the indispensable $\pi-\lambda$ theorem.
For our $\pi$-system, we take
$$\RC=\{A_1\times\cdots\times A_k\times\X\times\X\times\cdots: A_1,\dotsc,A_k\in\A,\,\,k = 1,2,\dotsc\}.$$
Let $\L$ be the collection of sets $A\in\AI$ such that $\theta\mapsto \PI(A)$ is measurable. We will show that $\L$ is a $\lambda$-system containing $\RC$. Since $\sigma(\RC) =\AI$, then $\AI\subseteq\L$ will follow by the $\pi-\lambda$ theorem, proving the desired result.

To see that $\RC\subseteq\L$, note that if $A_1,\dotsc,A_k\in\A$ then
$$\PI(A_1\times\cdots\times A_k\times\X\times\X\times\cdots) = P_\theta(A_1)\cdots P_\theta(A_k),$$
which is measurable as a function of $\theta$, since $\theta\mapsto P_\theta(A_i)$ is measurable for each $i = 1,\dotsc,k$ by Assumption~\ref{assumption:main}.

To show that $\L$ is a $\lambda$-system, we must show that $\XI\in\L$ and $\L$ is closed under increasing unions and proper differences. Since $\RC\subseteq\L$, we have $\XI\in\L$. If $A_1,A_2,\dotsc\in\L$ such that $A_1\subseteq A_2\subseteq\cdots$, and $A =\U_i A_i$, then 
$\PI(A) = \lim_{i\to\infty} \PI(A_i)$, which is measurable as a function of $\theta$ (being a limit of measurable functions). Lastly, if $A,B\in\L$ such that $A\subseteq B$ then $\PI(B\setminus A) = \PI(B)-\PI(A)$ is measurable as a function of $\theta$.
\end{proof}


\bibliography{references}
\bibliographystyle{abbrvnat}

\end{document}